\newcommand{\CC}{\mathbb{C}}
\newcommand{\FF}{\mathbb{F}}
\newcommand{\QQ}{\mathbb{Q}}
\newcommand{\ZZ}{\mathbb{Z}}
\newcommand{\R}{\mathbb{R}}
\newcommand{\ndiv}{\nmid}
\newcommand{\frakp}{\mathfrak{p}}
\numberwithin{equation}{section}
\newtheorem{theorem}[equation]{Theorem}
\newtheorem{cor}[equation]{Corollary}
\newtheorem{lemma}[equation]{Lemma}
\newtheorem{conjecture}[equation]{Conjecture}
\theoremstyle{definition}
\newtheorem{remark}[equation]{Remark}
\DeclareMathOperator{\Conj}{Conj}
\DeclareMathOperator{\Frob}{Frob}
\DeclareMathOperator{\GL}{GL}
\DeclareMathOperator{\Li}{Li}
\DeclareMathOperator{\Norm}{Norm}
\DeclareMathOperator{\rank}{rank}
\DeclareMathOperator{\Real}{Re}
\DeclareMathOperator{\SO}{SO}
\DeclareMathOperator{\ST}{ST}
\DeclareMathOperator{\SU}{SU}
\DeclareMathOperator{\Sym}{Sym}
\DeclareMathOperator{\Trace}{Trace}
\title{An application of the effective Sato-Tate conjecture}
\date{June 5, 2015}
\author{Alina Bucur}
\address{Department of Mathematics, University of California at San Diego, 9500 Gilman Dr \#0112, La Jolla, CA 92093}
\email{alina@math.ucsd.edu}
\thanks{Bucur was supported by the Simons Foundation (collaboration grant \#244988) and UCSD
(Hellmann fellowship)}
\author{Kiran S. Kedlaya}
\address{Department of Mathematics, University of California at San Diego, 9500 Gilman Dr \#0112, La Jolla, CA 92093}
\email{kedlaya@ucsd.edu}
\urladdr{http://kskedlaya.org}
\thanks{Kedlaya was supported by NSF (grant DMS-1101343) and
UCSD (Stefan E. Warschawski professorship)}
\begin{document}

\begin{abstract}
Based on the Lagarias-Odlyzko effectivization of the Chebotarev density theorem,
Kumar Murty gave an effective version of the Sato-Tate conjecture for an elliptic curve conditional on analytic continuation and Riemann hypothesis for the symmetric power $L$-functions.
We use Murty's analysis to give a similar conditional effectivization of the generalized Sato-Tate conjecture for an arbitrary motive. As an application, we give a conditional upper bound of the form $O((\log N)^2 (\log \log 2N)^2)$
for the smallest prime at which two given rational elliptic curves with conductor at most $N$ have Frobenius traces of opposite sign.
\end{abstract}

\subjclass[2010]{11G05, 11R44}

\maketitle

Let $\pi(x)$ denote the number of prime numbers less than or equal to $x$.
Hadamard and de la Vall\'ee-Poussin proved the prime number theorem
\[
\pi(x) = (1 + o(1)) \frac{x}{\log x}
\]
by exploiting the relationship
between prime numbers and the zeroes of the Riemann zeta function. Assuming Riemann's
hypothesis that the zeroes in the critical strip $0 \leq \Real(s) \leq 1$ all lie on the line
$\Real(s) = 1/2$, one gets a much more precise estimate for $\pi(x)$:
\[
\pi(x) = \Li(x) + O(x^{1/2} \log x) \qquad \left(\Li(x) = \int_2^x \frac{dt}{\log t}\right).
\]
Here and throughout this paper, the 
implied constant in the big-O notation is absolute and effectively computable, and the assertion
applies for all $x$ exceeding some other effectively computable absolute constant.
(For details, see any introductory text on analytic number theory, e.g., \cite{davenport}.)

A similar paradigm applies to the distribution of values of various other functions of prime numbers,
or more generally of prime ideals in a number field $K$: one gets an asymptotic result using some limited
analytic information about $L$-functions, but under the analogue of the Riemann hypothesis one gets
an estimate with a small effective error term. For example, 
for the Chebotarev density theorem (describing the distribution of Frobenius classes for a fixed Galois extension of $K$), this  effectivization process was described by Lagarias and Odlyzko \cite{lagarias-odlyzko}.
More recently, the Sato-Tate conjecture (describing the distribution
of Frobenius traces for a fixed elliptic curve over $K$) 
has been established for $K$ totally real through the efforts of Taylor et al.\ (see \cite{BLGG11} for a definitive
result); in this case, the effectivization process had
been described previously by Kumar Murty \cite{kumar}.

The previous two examples can both be subsumed into a generalized Sato-Tate conjecture for an arbitrary motive,
taking an Artin motive in the case of Chebotarev and the 1-motive of an elliptic curve in the case of Sato-Tate.
The first purpose of this paper is to explain, under suitable analytic hypotheses
on motivic $L$-functions (Conjecture~\ref{conj:motivic L-functions}),
how to obtain effective error bounds for the generalized Sato-Tate conjecture. The technique is essentially an application of Weyl-type explicit formulas as in \cite{lagarias-odlyzko} and \cite{kumar}; in fact, Murty's treatment of the analytic arguments in \cite{kumar} is already general enough to apply to arbitrary motives,
so it is not necessary to redo any of the complex analysis.
(Murty was practically forced to work at this level of generality to handle the usual Sato-Tate conjecture, because he needed his arguments to apply uniformly over symmetric powers. Here we apply them uniformly over representations of a compact Lie group.)

The second purpose of this paper is to indicate an application of the effective form of the generalized
Sato-Tate conjecture to a classical question about the arithmetic of elliptic curves.
Let $E_1$ and $E_2$ be nonisogenous elliptic curves over $K$, neither having complex multiplication.
The isogeny theorem of Faltings \cite{faltings} implies that there exists a prime ideal $\frakp$ of $K$
at which $E_1, E_2$ both have good reduction and have distinct Frobenius traces. 
In particular, for any fixed prime $\ell$, there exists a prime ideal $\frakp$ of $K$ at which the Frobenius traces of $E_1, E_2$ differ modulo $\ell$.
Assuming the generalized Riemann hypothesis for Artin $L$-functions,
one can use the effective form of the Chebotarev density theorem
(as suggested by Serre in \cite{serre-chebotarev}; see also Corollary~\ref{C:Serre bound})
to show the least norm of such a prime ideal is
\[
O((\log N)^2 (\log \log 2N)^b)
\]
for some fixed $b\geq0$. 
Assuming the generalized Riemann hypothesis for $L$-functions of the form $L(s,\Sym^m E_1 \otimes \Sym^n E_2)$,
we use the effective form of the generalized Sato-Tate conjecture for the abelian surface $E_1 \times_K E_2$
to obtain a similar bound for the least norm of a prime ideal at which the Frobenius traces of $E_1, E_2$
have opposite sign (Theorem~\ref{T:two elliptic curves}).
In both cases, the optimal bound is most likely closer to $O(\log N)$, but by analogy with the problem of finding the least quadratic nonresidue modulo $N$, it is unlikely that one can do better than $O((\log N)^2)$  using
$L$-function methods.

Although we will not do so here, we mention that the framework of the generalized Sato-Tate conjecture includes many additional questions about distinguishing $L$-functions, a number of which have been considered previously.
For instance, Goldfeld and Hoffstein \cite{goldfeld-hoffstein} established an upper bound on the first distinguishing coefficient for a pair of holomorphic Hecke newforms, by an argument similar to ours but with a milder analytic hypothesis (the Riemann hypothesis for the Rankin-Selberg convolutions of the two forms with themselves and each other).
Sengupta \cite{sengupta} carried out the analogous analysis with the Fourier coefficients
replaced by normalized Hecke eigenvalues (this only makes a difference when the weights are distinct). The analogue of Serre's argument for modular forms was given by Ram Murty
\cite{ram} and subsequently extended to Siegel modular forms by Ghitza
\cite{ghitza} for Fourier coefficients and Ghitza and Sayer \cite{ghitza-sayer} for Hecke eigenvalues.

\section{Motivic $L$-functions and motivic Galois groups}

We begin by recalling the conjectural properties of motivic $L$-functions, as in \cite{serre-motives}.

Fix two number fields $K,L$.
Let $\mathcal M$ be a pure motive of weight $w$ over $K$ with coefficients in $L$.
For each prime ideal $\frakp$ of $K$, 
let $G_{\frakp}$ be a decomposition subgroup of $\frakp$ inside the absolute Galois group $G_K$,
let $I_\frakp$ be the inertia subgroup of $G_{\frakp}$,
and let $\Frob_{\frakp} \in G_{\frakp}/I_{\frakp}$ be the Frobenius element.
The \emph{Euler factor} of $\mathcal M$ at $\frakp$ (for the automorphic normalization) is the function
\[
L_{\frakp}(s,\mathcal M) =  \det(1 - \Norm(\frakp)^{-s-w/2} \Frob_{\frakp}, V_{v}(\mathcal M)^{I_\frakp} \otimes_{L_v} \CC)^{-1}
\]
for $v$ a finite place of $L$ equipped with an embedding $L_v \hookrightarrow \CC$
and $V_{v}(\mathcal M)$ the $v$-adic \'etale realization of $\mathcal M$
equipped with its action of $G_{\frakp}$.
It is clear that this definition does not depend on the choice
of $G_{\frakp}$; it is conjectured also not to depend on $v$ or the embedding $L_v \hookrightarrow \CC$, 
and this is known when $\mathcal M$ has good reduction
at $\frakp$ (which excludes only finitely many primes).

The ordinary $L$-function of $\mathcal M$ is the the Euler product
\[
L(s,\mathcal M) = \prod_{\frakp} L_{\frakp}(s,\mathcal M).
\]
For each infinite place $\infty$ of $K$, there is also an archimedean Euler factor defined as follows.
Put
\[
\Gamma_{\R}(s) = \pi^{-s/2} \Gamma(s/2), \qquad
\Gamma_{\CC}(s) = 2^{-s} \pi^{-s} \Gamma(s).
\]
Form the Betti realization of $\mathcal M$ at $\infty$ and the spaces $H^{p,q}$ for $p+q=w$,
and put $h^{p,q} = \dim H^{p,q}$. Note that complex conjugation takes $H^{p,q}$ to $H^{q,p}$
and thus acts on $H^{w/2,w/2}$; let $h^+$ and $h^-$ be the dimensions of the positive and negative eigenspaces
(both taken to be $0$ if $w$ is odd). Then put
\[
L_{\infty}(s,\mathcal M) = 
\Gamma_{\R}(s)^{h^+} \Gamma_{\R}(s+1)^{h^-}
\prod_{p+q=w,p<q} \Gamma_{\CC}(s+w/2-p)^{h^{p,q}}.
\]
The completed $L$-function is then defined as
\[
\Lambda(s,\mathcal M) = N^{s/2} L(s,\mathcal M) \prod_{\infty} L_{\infty}(s,\mathcal M),
\]
for $N$ the absolute conductor of $\mathcal M$ (i.e., the norm from $K$ to $\QQ$ of the conductor ideal of $\mathcal M$).

\begin{conjecture} \label{conj:motivic L-functions}
Let $d$ be the dimension of the fixed subspace of 
the motivic Galois group of $\mathcal M(-w/2)$ (taken to be $0$ if $w$ is odd).
\begin{enumerate}
\item[(a)]
The function $s^d (1-s)^d \Lambda(s,\mathcal M)$ (which is defined \emph{a priori} for $\Real(s) > 1$)
extends to an entire function on $\CC$ of order $1$
which does not vanish at $s=0,1$. (Recall that an entire function $f: \CC \to \CC$ is of \emph{order $1$}
if $f(z) e^{-\mu|z|}$ is bounded for each $\mu > 1$.)
\item[(b)]
Let $\mathcal M^*$ denote the Cartier dual of $\mathcal M$. Then there exists $\epsilon \in \CC$ with $|\epsilon| = 1$ such that
$\Lambda(1-s,\mathcal M) = \epsilon \Lambda(s,\mathcal M^*)$ for all $s \in \CC$.
\item[(c)]
The zeroes of $\Lambda(s,\mathcal M)$ all lie on the line $\Real(s) = 1/2$.
\end{enumerate}
\end{conjecture}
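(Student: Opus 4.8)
\emph{Proof proposal.} The three assertions are the standard package of analytic conjectures attached to a motivic $L$-function (see \cite{serre-motives}), and none of them is known unconditionally in the generality stated here; so the plan is not to prove anything outright, but to indicate how each part would be deduced from the Langlands program, since that is how the known special cases are obtained.

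For (a) and (b) the plan is to pass through automorphy. Conjecturally the normalized motive $M(-w/2)$ is attached to an isobaric automorphic representation $\pi$ of $\GL_n/K$, where $n = \rank M$, in such a way that $L(s,M) = L(s,\pi)$ up to finitely many Euler factors, the archimedean factors $L_\infty(s,M)$ agree with the archimedean $L$-factors of $\pi$, the absolute conductor $N$ agrees with the conductor of $\pi$, and the Cartier dual $M^*$ corresponds to the contragredient $\widetilde{\pi}$. Granting this, meromorphic continuation and the functional equation $\Lambda(1-s,M) = \epsilon\,\Lambda(s,M^*)$ become the Godement--Jacquet functional equation for the standard $L$-function of $\GL_n$. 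The growth statement, that $s^d(1-s)^d\Lambda(s,M)$ is entire of order $1$, then follows from the Hadamard factorization together with Phragm\'en--Lindel\"of convexity in vertical strips and Stirling's estimate for the $\Gamma$-factors. Finally the location and nature of the poles is read off from the isobaric decomposition of $\pi$: a cuspidal constituent of some $\GL_m/K$ contributes an entire $L$-factor unless it is the trivial character of $\GL_1$, whose $L$-function is $\zeta_K(s)$, with completed version having simple poles at $s=0,1$; and the trivial character occurs in $\pi$ with multiplicity exactly $d$, the multiplicity of the trivial representation of the motivic Galois group in $M(-w/2)$. This produces the factor $s^d(1-s)^d$, and the non-vanishing at $s=0,1$ then follows from the functional equation together with the Jacquet--Shalika theorem that $L(s,\pi)$ does not vanish on $\Real(s)=1$.

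Part (c) is the generalized Riemann hypothesis for $\Lambda(s,M)$, and here there is genuinely no plan: even in the cases where automorphy is available, such as Artin $L$-functions of solvable type and $L$-functions of classical modular forms (hence of modular elliptic curves), the Riemann hypothesis for the resulting $L$-function is open and no approach is in sight. This is the main obstacle; accordingly we do not attempt to prove (c), and its role is instead to be assumed, in suitable special instances, as a hypothesis in the effective results that follow, which is why every subsequent theorem of the paper is conditional.
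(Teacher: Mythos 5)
This statement is a conjecture, and the paper does not prove it: the accompanying remark only observes that parts (a) and (b) follow from (potential) automorphy in the known special cases, with the order-$1$ condition supplied by Gelbart--Shahidi, and that part (c) is open in all cases. Your proposal treats the statement in exactly the same way --- an automorphy-based sketch for (a) and (b) and an explicit acknowledgment that (c) is assumed rather than proved --- so it matches the paper's approach.
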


\begin{remark}
At present, the most promising approach to proving parts (a) and (b) of Conjecture~\ref{conj:motivic L-functions} 
for a given $\mathcal M$ is to show that $\Lambda(s,\mathcal M)$ coincides with a potentially automorphic $L$-function. 
For example, this is known for the symmetric power $L$-functions of an elliptic curve over a totally real number
field \cite{BLGG11} and for the Rankin-Selberg product of two such $L$-functions \cite{harris}.
This implies (a) and (b) for such $L$-functions, using the work of Gelbart and Shahidi
\cite{gelbart-shahidi} to verify the order 1 condition.
See \cite{murty-murty} for an overview of how to use potential automorphy to deduce the Sato-Tate conjecture.
Part (c), the analogue of the Riemann hypothesis, is unknown in all cases.
\end{remark}

\section{Equidistribution and motivic $L$-functions}

We next recall how to use the analytic information about motivic $L$-functions provided by 
Conjecture~\ref{conj:motivic L-functions}
to obtain equidistribution statements with small effective error bounds. This combines the general approach to equidistribution described in 
\cite[Appendix to Chapter~1]{serre-abelian} with the extraction of effective bounds from $L$-functions
described in \cite{lagarias-odlyzko, kumar}.

Take $\mathcal M$ as before. 
The \emph{motivic Sato-Tate group} of $\mathcal M$ is the kernel of the map from the motivic Galois group of
$\mathcal M \oplus L(1)$ to the motivic Galois group of the Tate motive $L(1)$; this is a subgroup of the usual motivic
Galois group of $\mathcal M$. Taking a compact form of the motivic Sato-Tate group
yields the \emph{Sato-Tate group} $G$. From the construction, one obtains a sequence $\{g_{\frakp}\}$
in the space $\Conj(G)$ of conjugacy classes of $G$ corresponding to prime ideals of good reduction,
such that for any motive $\mathcal N$ pure of weight $k$ in the Tannakian
category generated by $\mathcal M$, the characteristic polynomial of $\Norm(\frakp)^{-k/2} \Frob_{\frakp}$
on any \'etale realization of $\mathcal N$ equals the characteristic polynomial of $g_{\frakp}$ on
the corresponding representation of $G$.

Topologize $\Conj(G)$ as a quotient of $G$, and equip it with the measure $\mu$ with the property that for
any continuous function $F: \Conj(G) \to \CC$, $\mu(F)$ is the Haar measure of the pullback of $F$ to $G$.
The statement that the $g_{\frakp}$ are equidistributed in $\Conj(G)$ would mean that for any $F$,
if we write $F(\frakp)$ as shorthand for $F(g_{\frakp})$, then
\begin{equation} \label{eq:equidistribution}
\sum_{\Norm(\frakp) \leq x} F(\frakp) = ( \mu(F) + o(1)) \sum_{\Norm(\frakp) \leq x} 1.
\end{equation}
By the Peter-Weyl theorem, we have
\begin{equation} \label{eq:peter-weyl}
F = \sum_\chi \mu(F \overline{\chi}) \chi
\end{equation}
where the sum runs over irreducible characters $\chi$ of $G$,
so it suffices to check \eqref{eq:equidistribution} for these characters. For such a character $\chi$,
let $L(s,\chi)$ be the $L$-function of the motive  corresponding to $\chi$ in the Tannakian category generated by $\mathcal M$. One then shows that if\footnote{In fact, somewhat less analytic information is needed;
one only needs $L(s,\chi)$ to extend to a meromorphic function on $\Real(s) \geq 1$ with no zeroes
or poles except for a simple pole at $s=1$ in case $\chi$ is trivial. This resembles the standard proof of the
prime number theorem; see \cite[Theorem~1, Appendix to Chapter~1]{serre-abelian}.}
parts (a) and (b) of Conjecture~\ref{conj:motivic L-functions}
hold for each $L(s,\chi)$, then \eqref{eq:equidistribution} holds. 

Assume now that Conjecture~\ref{conj:motivic L-functions}, including part (c), holds for each $L(s,\chi)$. 
One can obtain information about the average behavior of $\chi(\frakp)$ by computing a suitable contour integral of the logarithmic derivative of $L(s,\chi)$, as in \cite{lagarias-odlyzko}. By keeping careful track of
the dependence on various factors, as in \cite[Proposition~4.1]{kumar}, one obtains the following estimate: for $d_{\chi} = \dim(\chi) [K:\QQ]$, 
\begin{equation} \label{eq:explicit formula1}
\sum_{\Norm(\frakp) \leq x} \chi(\frakp) \log \Norm(\frakp) = \mu(\chi) x + O(d_\chi x^{1/2} \log x  \log (N (x + d_\chi))).
\end{equation}
Using Abel partial summation, it then follows that
\begin{equation} \label{eq:explicit formula2}
\sum_{\Norm(\frakp) \leq x} \chi(\frakp)  = \mu(\chi) \Li(x) + O(d_\chi x^{1/2} \log (N (x + d_\chi))).
\end{equation}
Since the implied constant in the big-O notation is absolute (and in particular independent of $G$)
and effectively computable, one can obtain an effective bound on 
\[
\sum_{\Norm(\frakp) \leq x} F(\frakp) - \mu(F) \Li(x)
\]
for a general continuous function $F: \Conj(G) \to \CC$
by summing \eqref{eq:explicit formula2} over the terms of the expansion \eqref{eq:peter-weyl}.
In practice, one gets a slightly better result by applying \eqref{eq:explicit formula2} only to characters
of small dimension, lumping the large characters directly into the error term.
Even with this refinement, though, to obtain reasonable results one must impose enough regularity on $f$ to get sufficient decay for the coefficients in \eqref{eq:peter-weyl}.
This was demonstrated explicitly for the case of $\SU(2)$ by Murty \cite{kumar}, whose arguments we recall in
the next section.

\section{The case of an elliptic curve}
\label{sec:elliptic curves}

In this section, we recall the treatment of the effective Sato-Tate conjecture for elliptic curves by Murty \cite{kumar}
and indicate how it arises as a specialization of the preceding discussion. Our exposition is somewhat complementary to that of \cite{kumar}, where the explicit formula \eqref{eq:explicit formula1} and the application to
the Lang-Trotter conjecture are treated in detail; we instead take \eqref{eq:explicit formula1} as a black
box and discuss how effective Sato-Tate emerges from it in detail. (Concretely, this means that we
apply Lemma~\ref{lemma:vinogradov} with slightly different parameters than in \cite{kumar}.)

For $E$ an elliptic curve over a number field $K$ and $\frakp$ a prime ideal of $K$ at which $E$ has good reduction,
let $a_{\frakp} = a_{\frakp}(E)$ be the
Frobenius trace of $E$ at $\frakp$, so that $\Norm(\frakp) + 1 - a_{\frakp}$ is the number of rational points on the reduction of $E$ modulo $\frakp$. 
Then define the Frobenius angle $\theta_{\frakp} = \theta_{\frakp}(E) \in [0, \pi]$ by the formula
\[
1 - a_{\frakp}(E)T + \Norm(\frakp) T^2 = (1 - \Norm(\frakp)^{1/2} e^{i \theta} T)
(1 - \Norm(\frakp)^{1/2} e^{-i \theta}T).
\]
Let $\mu_{\ST}$ denote the Sato-Tate measure, so that 
\[
\mu_{\ST}(f) = \int_0^{\pi} \frac{2}{\pi} \sin^2 \theta f(\theta) \,d\theta.
\]
For $I$ an interval, let $\chi_I$ denote the characteristic function. We prove the following theorem.
(As usual, the implied constant in the big-O notation is absolute and effectively computable.)
\begin{theorem}[after Murty] \label{T:effective Sato-Tate}
Let $E$ be an elliptic curve over a number field $K$ without complex multiplication.
Let $N$ denote the absolute conductor of $E$.
Assume that $L(s,\Sym^k E)$ satisfies Conjecture~\ref{conj:motivic L-functions} for all $k \geq 0$.
Then for any closed subinterval $I$ of $[0, \pi]$,
\[
\sum_{\Norm(\frakp) \leq x, \frakp \ndiv N} \chi_I(\theta_\frakp)
= \mu_{\ST}(I) \Li(x) + O([K:\QQ]^{1/2} x^{3/4} (\log (Nx))^{1/2} ).
\]
\end{theorem}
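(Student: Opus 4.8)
The plan is to specialize the equidistribution machinery of the preceding section to the group $\SU(2)$, where the irreducible characters are Chebyshev polynomials and the relevant $L$-functions are exactly the $L(s,\Sym^k E)$, and then to combine the explicit formula \eqref{eq:explicit formula2} with a Vinogradov-type sandwiching of the indicator $\chi_I$.

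First I would record the dictionary. As $E$ has no complex multiplication, its Sato--Tate group is $G = \SU(2)$; one identifies $\Conj(G)$ with $[0,\pi]$ by sending the conjugacy class of a matrix with eigenvalues $e^{\pm i\theta}$ to $\theta$, so that the pushforward measure $\mu$ becomes $\mu_{\ST}$ and the class $g_\frakp$ attached above to a prime of good reduction becomes the Frobenius angle $\theta_\frakp$. The irreducible characters of $\SU(2)$ are the $\chi_m$ ($m \ge 0$) with $\chi_m(\theta) = \sin((m+1)\theta)/\sin\theta$, i.e.\ $U_m(\cos\theta)$ for $U_m$ the $m$-th Chebyshev polynomial of the second kind; they are orthonormal for $\mu_{\ST}$, satisfy $|\chi_m(\theta)| \le m+1$, and $\chi_m$ corresponds to the motive $\Sym^m E$. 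Thus the hypothesis of the theorem is exactly what licenses applying \eqref{eq:explicit formula2} to every $\chi = \chi_m$: for $m=0$ it reads $\sum_{\Norm(\frakp) \le x} 1 = \Li(x) + O([K:\QQ]\, x^{1/2}\log(N(x+[K:\QQ])))$, while for $m \ge 1$, since $\mu(\chi_m) = 0$ and $d_{\chi_m} = (m+1)[K:\QQ]$,
\[
\sum_{\Norm(\frakp) \le x,\ \frakp \ndiv N} \chi_m(\theta_\frakp) = O\bigl((m+1)[K:\QQ]\, x^{1/2}\log(N(x + (m+1)[K:\QQ]))\bigr);
\]
the primes dividing $N$ number $O(\log N)$ and are harmless, so they may be included or omitted freely.

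With the analytic input thus quarantined, the remaining ingredient is an approximation lemma: for every integer $M \ge 1$ and every closed subinterval $I \subseteq [0,\pi]$ there are $F_M^\pm = \sum_{m=0}^M c_m^\pm \chi_m$ with $F_M^- \le \chi_I \le F_M^+$ on $[0,\pi]$, with $c_0^\pm = \mu_{\ST}(F_M^\pm) = \mu_{\ST}(I) + O(1/M)$, and with $|c_m^\pm| = O(1/(m+1))$ for all $m$, hence $\sum_{m=0}^M (m+1)|c_m^\pm| = O(M)$. One gets such $F_M^\pm$ by extending $\chi_I$ evenly to $\R/2\pi\ZZ$ and taking the classical Beurling--Selberg majorant and minorant of degree $M$: the estimate $\mu_{\ST}(F_M^+ - F_M^-) = O(1/M)$ descends from the known bound for Lebesgue measure because $\sin^2\theta \le 1$, and the coefficient bounds come from $\widehat{F_M^\pm}(k) = \widehat{\chi_I}(k) + O(1/M)$, the estimate $\widehat{\chi_I}(k) = O(\min(1, 1/|k|))$, and the relation $c_m^\pm = \widehat{F_M^\pm}(m) - \widehat{F_M^\pm}(m+2)$.

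Finally I would sandwich $\sum_{\frakp} F_M^-(\theta_\frakp) \le \sum_{\frakp} \chi_I(\theta_\frakp) \le \sum_{\frakp} F_M^+(\theta_\frakp)$ (all sums over $\Norm(\frakp) \le x$, $\frakp \ndiv N$), expand the two outer sums by linearity, insert the $m=0$ prime count and the $m \ge 1$ bound above, and use $c_0^\pm = \mu_{\ST}(I) + O(1/M)$, to reach
\[
\sum_{\Norm(\frakp) \le x,\ \frakp \ndiv N} \chi_I(\theta_\frakp) = \mu_{\ST}(I)\,\Li(x) + O\!\left(\frac{x}{M\log x}\right) + O\bigl([K:\QQ]\, M\, x^{1/2}\log(Nx)\bigr).
\]
Taking $M \asymp x^{1/4}\bigl([K:\QQ]\log x \cdot \log(Nx)\bigr)^{-1/2}$ when this exceeds $1$ balances the two error terms and produces $O([K:\QQ]^{1/2} x^{3/4}(\log(Nx))^{1/2})$; when it does not exceed $1$ one is in a range where the claimed bound already dominates $x/\log x$, so the conclusion is trivial there. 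There is no deep obstacle here --- the serious analysis has been imported through \eqref{eq:explicit formula2} --- and the only point that needs real care is the approximation step: one must have $F_M^\pm$ simultaneously $O(1/M)$-close to $\chi_I$ in mean and equipped with coefficients decaying like $1/(m+1)$, and it is exactly this pairing, played against the $(m+1)[K:\QQ]x^{1/2}\log(Nx)$ shape of the per-character error term, that pins the exponent down to $3/4$; the $\SU(2)$ bookkeeping, the primes dividing $N$, and the optimization of $M$ are all routine.
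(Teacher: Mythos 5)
Your proposal is correct and follows the same overall architecture as the paper --- identify the Sato--Tate group with $\SU(2)$ so that the characters $\chi_m$ correspond to $\Sym^m E$, feed each character into \eqref{eq:explicit formula2} as a black box, sandwich $\chi_I$ between trigonometric polynomials of degree $M$ with controlled character coefficients, and optimize $M$ --- but you replace the paper's key approximation device. The paper uses Vinogradov's construction (Lemma~\ref{lemma:vinogradov}): a smoothed indicator depending on a width $\Delta$ and a smoothing order $r$, which must then be \emph{truncated} at degree $M$, producing three error terms ($\Delta\,\Li(x)$ from the sandwiching, a truncation tail, and the $L$-function term weighted by $\sum m|c_m - c_{m+2}| = O(\Delta^{-1}\log M)$) that are balanced via $\Delta \asymp x^{-1/4}[K:\QQ]^{1/2}(\log x)(\log(Nx))^{1/2}$, $M = \lceil \Delta^{-2}\rceil$. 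You instead take the Beurling--Selberg majorant and minorant of the (evenly extended) indicator, which are already polynomials of degree $M$, so there is no truncation error and a single parameter $M$ controls both the $L^1$-discrepancy $O(1/M)$ and the weighted coefficient sum $O(M)$; the resulting two-term balance is cleaner and in fact yields the marginally sharper bound $O([K:\QQ]^{1/2}x^{3/4}(\log(Nx))^{1/2}(\log x)^{-1/2})$, since your coefficient sum avoids the $\log M$ loss. What the paper's choice buys in exchange is the free parameter $r$, which is exploited later (the proof of Theorem~\ref{T:distinguish elliptic curves} reruns the argument with $r=2$ to strip logarithms); your construction does not generalize in that direction as directly, though it is not needed for the present theorem. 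One shared loose end, not a defect specific to your write-up: in the regime where the optimal $M$ (respectively the paper's $\Delta$) degenerates, neither your trivial-bound remark nor the paper's implicit assumption fully covers all combinations of large $[K:\QQ]$ and $N$ relative to $x$; both arguments really require $x$ large enough that the error term is dominated by the main term, which is the intended reading.
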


The weaker statement that $\sum_{\Norm(\frakp) \leq x, \frakp \ndiv N} \chi_I(\theta_\frakp)
\sim \mu_{\ST}(I) \Li(x)$ is the Sato-Tate conjecture, which is known unconditionally when $K$ is totally real.
See \cite{murty-murty} for more discussion.

To prove Theorem~\ref{T:effective Sato-Tate}, we first note that the number of primes dividing $N$ (which includes all primes of bad reduction) is $O(\log N)$, which is subsumed by our error term. We can thus safely
neglect bad primes in what follows.

We next introduce a family of functions $F$ for which we have control over the coefficients appearing in \eqref{eq:peter-weyl}, which we will use to approximate the characteristic function $\chi_I$.
Since $E$ has no complex multiplication, its Sato-Tate group is $\SU(2)$,
whose characters are
\begin{equation} \label{eq:SU2 characters}
\chi_k(\theta) = \sum_{j=0}^k e^{(k-2j)i \theta} \qquad (k=0,1,\dots).
\end{equation}
Thus expanding $F$ in terms of the $\chi_k$ amounts to ordinary Fourier analysis: 
if we formally extend $F$ to an even function on $[-\pi, \pi]$
and form the ordinary Fourier decomposition
\begin{equation} \label{eq:F expansion1}
F(\theta) = c_0 + \sum_{k=1}^\infty 2c_k \cos (k \theta),
\end{equation}
we can then write
\begin{equation} \label{eq:F expansion2}
F(\theta) = \sum_{k=0}^\infty (c_k - c_{k+2}) \chi_k(\theta).
\end{equation}
We can thus avail ourselves of a construction of Vinogradov \cite[Lemma~12]{vinogradov}.
For now, we take the construction as a black box; we will recall the method of proof in the context of the generalized Sato-Tate conjecture in \S \ref{sec:general case}.

\begin{lemma}\label{lemma:vinogradov}
Let $r$ be a positive integer, and let $A, B, \Delta$ be real numbers satisfying
\[0 <\Delta <\frac{1}{2}, \qquad \Delta \leq B-A \leq 1-\Delta. \] 
Then there exists a continuous periodic function $D_{A,B}= D:\R \to \R$ with period $1$ that satisfies the following conditions.
\begin{enumerate}
\item For $A+\frac{1}{2}\Delta \leq x \leq B -\frac{1}{2}\Delta$, $D(x) = 1$. 
\item For $B+\frac{1}{2}\Delta \leq x \leq 1+A -\frac{1}{2}\Delta$,  $D(x) = 0$.
\item For $x$ in the remainder of the interval $\left [A -\frac{1}{2}\Delta, 1+ A -\frac{1}{2}\Delta \right]$, $0 \leq D(x) \leq 1$.
\item $D(x)$ has a Fourier series expansion of the form \[D(x) = \sum_{m\geq 0} \left( a_m \cos(2m \pi x) + b_m \sin(2m\pi x) \right)\] in which $a_0 = B-A$ and for all $m\geq 1,$  
\begin{equation}\label{eq:vinogradov}
|a_m|, |b_m| \leq \min \left\{ 2(B-A), \frac{2}{\pi m},
\frac{2}{\pi m} \left(\frac{r}{\pi m \Delta}\right)^r \right\}.
\end{equation}

\end{enumerate}
\end{lemma}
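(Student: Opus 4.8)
The plan is to build $D$ by convolving the characteristic function of an interval with a suitable smoothing kernel, following Vinogradov's classical trick. First I would set up the building block: consider the function $\psi$ that equals $1$ on the interval $[A, B]$ and $0$ on the rest of $[A, 1+A]$, extended periodically with period $1$; this is where the hypothesis $\Delta \le B - A \le 1 - \Delta$ guarantees that both the "on" interval and the "off" interval have length at least $\Delta$, leaving room to smooth. Its Fourier coefficients are the standard ones for an interval: $\widehat{\psi}(0) = B-A$ and the $m$-th coefficients are bounded by $\min\{B-A, 1/(\pi m)\}$ in absolute value (from $|\int_A^B e^{-2\pi i m t}\,dt| = |\sin(\pi m (B-A))|/(\pi m) \le 1/(\pi m)$, and trivially by $B-A$).

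Next I would introduce the Fejér-type mollifier $k_{r,\Delta}$: take the normalized $r$-fold convolution of the characteristic function of an interval of length $\Delta/r$ with itself (so its total length of support is $\Delta$), normalized to have integral $1$. The key facts are that $k_{r,\Delta} \ge 0$, $\int k_{r,\Delta} = 1$, its support has length $\Delta$, and its Fourier transform is an $r$-th power: $\widehat{k_{r,\Delta}}(m) = \left(\tfrac{\sin(\pi m \Delta/r)}{\pi m \Delta / r}\right)^r$, whence $|\widehat{k_{r,\Delta}}(m)| \le \min\{1, (r/(\pi m \Delta))^r\}$ using $|\sin t| \le \min\{|t|, 1\}$. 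Then I would define $D = \psi * k_{r,\Delta}$ (periodic convolution), after choosing the interval underlying $\psi$ to be shifted by $\Delta/2$ so that the convolution lands correctly: concretely one arranges that $D$ is the convolution of $\chi_{[A, B]}$ — or a slight translate — with a kernel supported on an interval of length $\Delta$ centered at the origin.

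The properties then follow mechanically. Since $k_{r,\Delta}$ is a probability density supported on a length-$\Delta$ interval around $0$, the value $D(x)$ is an average of $\psi$ over a length-$\Delta$ window; it is identically $1$ once that window lies entirely inside $[A,B]$, i.e.\ for $A + \tfrac12\Delta \le x \le B - \tfrac12\Delta$, identically $0$ once the window lies entirely in the complementary interval, i.e.\ for $B + \tfrac12\Delta \le x \le 1 + A - \tfrac12\Delta$, and between $0$ and $1$ elsewhere since it is an average of a $\{0,1\}$-valued function; this gives (1)--(3), and continuity is automatic from convolution with an $L^1$ kernel (indeed against a bounded function). For (4), convolution multiplies Fourier coefficients: $a_m \pm i b_m$ (up to the usual factor) equals $\widehat{\psi}(m)\,\widehat{k_{r,\Delta}}(m)$, so $a_0 = (B-A)\cdot 1 = B-A$ and for $m \ge 1$,
\[
|a_m|, |b_m| \le 2\,|\widehat{\psi}(m)|\,|\widehat{k_{r,\Delta}}(m)| \le 2 \min\Bigl\{B-A, \tfrac{1}{\pi m}\Bigr\}\cdot \min\Bigl\{1, \bigl(\tfrac{r}{\pi m \Delta}\bigr)^r\Bigr\},
\]
which is bounded by each of $2(B-A)$, $\tfrac{2}{\pi m}$, and $\tfrac{2}{\pi m}\bigl(\tfrac{r}{\pi m \Delta}\bigr)^r$, hence by their minimum, as claimed in \eqref{eq:vinogradov}. (The factor-of-$2$ bookkeeping between the complex Fourier coefficients $\widehat{D}(m)$ and the real pair $(a_m, b_m)$ is the only place one has to be slightly careful.)

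The main obstacle is not any single hard estimate but getting the \emph{geometry of the shifts exactly right}: one must pin down precisely which translate of $\chi_{[A,B]}$ and which centering of the kernel of length $\Delta$ make the plateau and the zero region come out at the stated endpoints $A + \tfrac12\Delta,\ B - \tfrac12\Delta,\ B + \tfrac12\Delta,\ 1 + A - \tfrac12\Delta$, while simultaneously keeping $\widehat{\psi}(0) = B - A$ so that $a_0$ is unchanged. Everything else — positivity, the $0 \le D \le 1$ bound, continuity, and the three-way minimum in the coefficient estimate — is a routine consequence of the convolution structure and the elementary inequality $|\sin t| \le \min\{|t|,1\}$.
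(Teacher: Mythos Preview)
Your proposal is correct and follows essentially the same approach as the paper. The paper does not prove this lemma in \S\ref{sec:elliptic curves}, instead citing \cite[Lemma~12]{vinogradov} as a black box; but in \S\ref{sec:general case} it sketches exactly the construction you describe---convolve the characteristic function of the interval with the $r$-fold self-convolution of a normalized short interval, so that Fourier coefficients multiply and one obtains the desired polynomial decay---and notes that specializing to $G=\SU(2)$ recovers Vinogradov's function.
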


Leaving the choices of $A,B,\Delta,r$ unspecified for the moment,
let us define $D$ as in Lemma~\ref{lemma:vinogradov},
then define the function $F_{A,B}:\R\to \R$ by $F_{A,B}(\theta) = \displaystyle D\left( \frac{\theta}{2\pi}\right) + D\left(-\frac{\theta}{2\pi}\right).$ 

Its Fourier series has the form 
\begin{equation} \label{eq:Fourier series}
F_{A,B}(\theta) = \sum_{m \in \ZZ} c_{m,A,B} e^{im\theta},
\end{equation}
where \[c_{0,A,B} = 2a_0 = 2(B-A)\] and  \[c_{m,A,B}= c_{-m,A,B} = a_m \textrm{ for all }m \geq 1.\]

Let $M$ be a positive integer (to be specified later).
By truncating the Fourier series \eqref{eq:Fourier series} and using the bounds for the Fourier coefficients \eqref{eq:vinogradov}, we see that 
\[F_{A,B}(\theta) = \sum_{|m|\leq M} c_{m,A,B} e^{im\theta} + O\left( \frac{(r/\pi)^r}{M^r\Delta^{r}} \right) .\]
Rewriting in terms of the characters of $\SU(2)$ gives
\begin{equation}\label{eq:Fchitrunc}
F_{A,B}(\theta) = (c_{0,A,B}-c_{2,A,B}) + \sum_{m=1}^{M-2} (c_{m,A,B} - c_{m+2,A,B}) \chi_m(\theta) + O\left( \frac{(r/\pi)^r}{M^r\Delta^{r+1}} \right).
\end{equation}
If we take $r = 1$, then \eqref{eq:vinogradov} implies
\[
\sum_{m=1}^{M-2} m \left| c_{m,A,B} - c_{m+2,A,B} \right| 
= O\left( \frac{\log M}{\Delta} \right).
\]

Applying \eqref{eq:explicit formula2} then yields
\begin{align} \label{eq:estimate for F sum}
\lefteqn{\sum_{\Norm(\frakp) \leq x, \frakp \ndiv N} F_{A,B}(\theta_{\frakp})} \\
&
\nonumber
= (c_{0,A,B} - c_{2,A,B}) \Li(x) + O\left( \frac{[K:\QQ] x^{1/2}  \log (N(x+M)) \log M}{\Delta} \right) + O\left( \frac{x}{M \Delta \log x}
\right).
\end{align}
To deduce Theorem~\ref{T:effective Sato-Tate},
note that on one hand, the characteristic function of the interval $I = [2\pi \alpha,2 \pi \beta]$ is bounded from above
by $F_{\alpha-\Delta/2,\beta+\Delta/2}$ and from below by $F_{\alpha+\Delta/2,\beta-\Delta/2}$.
On the other hand, the quantities $c_{0,\alpha-\Delta/2,\beta+\Delta/2} - c_{2,\alpha-\Delta/2,\beta+\Delta/2}$
and $c_{0,\alpha+\Delta/2,\beta-\Delta/2} - c_{2,\alpha+\Delta/2,\beta-\Delta/2}$ each differ from $\mu_{ST}(I)$
by $O(\Delta)$.
We obtain the theorem by balancing the error terms in \eqref{eq:explicit formula2} with each other and with $O(\Delta \Li(x))$ by setting
\[
\Delta = x^{-1/4} [K:\QQ]^{1/2} (\log x) (\log (Nx))^{1/2},
\qquad
M = \lceil \Delta^{-2} \rceil.
\]

\section{The case of two elliptic curves}
\label{sec:two elliptic curves}

We now consider a variant of the previous situation.

\begin{theorem} \label{T:two elliptic curves}
Let $E_1, E_2$ be two $\overline{\mathbb{Q}}$-nonisogenous elliptic curves over a number field $K$, neither having complex multiplication.
Let $N$ be the product of the absolute conductors of $E_1$ and $E_2$.
For each prime ideal $\frakp$ of $K$ not dividing $N$, let $\theta_{1,\frakp}, \theta_{2, \frakp}$ be
the Frobenius angles of $E_1, E_2$ at $\frakp$.
Assume that the $L$-functions $L(s,\Sym^i E_1 \otimes \Sym^j E_2)$ for $i,j=0,1,\dots$
all satisfy Conjecture~\ref{conj:motivic L-functions}.
Then for any closed subintervals $I_1, I_2$ of $[0, \pi]$,
\[
\sum_{\Norm(\frakp) \leq x, \frakp \ndiv N} \chi_{I_1}(\theta_{1,\frakp})
\chi_{I_2}(\theta_{2,\frakp}) = \mu_{\ST}(I_1) \mu_{\ST}(I_2) \Li(x) + O([K:\QQ]^{1/3} x^{5/6} (\log (Nx))^{1/3} ).
\]
\end{theorem}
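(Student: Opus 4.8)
The plan is to mimic the proof of Theorem~\ref{T:effective Sato-Tate} with the Sato-Tate group $\SU(2)$ replaced by $\SU(2) \times \SU(2)$, which is the Sato-Tate group of $E_1 \times_K E_2$ since $E_1, E_2$ are nonisogenous and CM-free (the product has no ``extra'' algebraic relations, so the motivic Sato-Tate group is the full product). The irreducible characters of $\SU(2) \times \SU(2)$ are exactly the products $\chi_i \otimes \chi_j$, where $\chi_k$ is the character in \eqref{eq:SU2 characters}, and the corresponding motive in the Tannakian category generated by $E_1 \times_K E_2$ is $\Sym^i E_1 \otimes \Sym^j E_2$, so the hypothesis on $L(s, \Sym^i E_1 \otimes \Sym^j E_2)$ is precisely what is needed to invoke \eqref{eq:explicit formula2} for every irreducible character. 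The product Sato-Tate measure on $\Conj(\SU(2)\times\SU(2))$ is $\mu_{\ST}\times\mu_{\ST}$, whose value on the box $I_1\times I_2$ is $\mu_{\ST}(I_1)\mu_{\ST}(I_2)$.

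First I would build a two-variable approximant. Using Lemma~\ref{lemma:vinogradov} with $r=1$, define $F_{A_1,B_1}$ and $F_{A_2,B_2}$ in the $\theta_1$ and $\theta_2$ variables respectively, and set $G(\theta_1,\theta_2) = F_{A_1,B_1}(\theta_1)\,F_{A_2,B_2}(\theta_2)$. Then $G$ sandwiches $\chi_{I_1}(\theta_1)\chi_{I_2}(\theta_2)$ from above and below, using the box $[A_1,B_1]\times[A_2,B_2]$ slightly larger or smaller than $I_1\times I_2$ by $\Delta$ in each coordinate (the cross terms in the product of the upper bounds contribute $O(\Delta)$). Truncating each Fourier series at level $M$ and multiplying, $G(\theta_1,\theta_2) = \sum_{|m_1|,|m_2|\le M} c_{m_1}c_{m_2} e^{i(m_1\theta_1+m_2\theta_2)} + O\!\left(\frac{(1/\pi)}{M\Delta^2}\right)$, since one factor is $O(1)$ while the tail of the other is $O(1/(M\Delta))$ (using $0<\Delta<1/2$ so $B-A\le 1$). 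Rewriting in terms of $\chi_{m_1}\otimes\chi_{m_2}$ produces coefficients $(c_{m_1}-c_{m_1+2})(c_{m_2}-c_{m_2+2})$, and the $r=1$ bound gives
\[
\sum_{1\le m_1,m_2\le M} (m_1+m_2)\,\bigl|(c_{m_1}-c_{m_1+2})(c_{m_2}-c_{m_2+2})\bigr| = O\!\left(\frac{(\log M)^2}{\Delta^2}\right),
\]
since each one-variable sum $\sum_m m|c_m-c_{m+2}|$ is $O((\log M)/\Delta)$ and $\sum_m |c_m-c_{m+2}| = O((\log M)/\Delta)$ as well (indeed $O(1/\Delta)$, but $\log M$ suffices). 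Here $d_\chi = (m_1+m_2)[K:\QQ]$ for $\chi=\chi_{m_1}\otimes\chi_{m_2}$, so feeding this into \eqref{eq:explicit formula2} term by term yields, for the main contribution plus error,
\begin{align*}
\sum_{\Norm(\frakp)\le x,\ \frakp\ndiv N} G(\theta_{1,\frakp},\theta_{2,\frakp}) &= (c_0-c_2)^2\,\Li(x) \\
&\quad + O\!\left(\frac{[K:\QQ]\,x^{1/2}\log(N(x+M))\,(\log M)^2}{\Delta^2}\right) + O\!\left(\frac{x}{M\Delta^2\log x}\right),
\end{align*}
where $(c_0-c_2)^2$ is shorthand for the product of the two leading character coefficients, which differs from $\mu_{\ST}(I_1)\mu_{\ST}(I_2)$ by $O(\Delta)$.

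Finally I would optimize. We have three competing error terms: $O(\Delta\,\Li(x))$ from the sandwiching, $O(x^{1/2}(\log M)^2\Delta^{-2}\log(N(x+M))\cdot[K:\QQ])$ from the explicit formula, and $O(x\,M^{-1}\Delta^{-2}/\log x)$ from Fourier truncation. Choosing $M = \lceil \Delta^{-2}\rceil$ as before kills the truncation term against the main term (it becomes $O(\Delta^2 x/\log x)$, negligible), and then balancing $\Delta\,x/\log x$ against $x^{1/2}\Delta^{-2}\log(Nx)\cdot[K:\QQ]$ — that is, $\Delta^3 \asymp x^{-1/2}[K:\QQ]\log(Nx)(\log x)\cdot(\text{log factors})$ — gives $\Delta \asymp x^{-1/6}[K:\QQ]^{1/3}(\log(Nx))^{1/3}$ up to logarithmic factors, and the resulting error is $O([K:\QQ]^{1/3} x^{5/6}(\log(Nx))^{1/3})$ as claimed. (The $(\log M)^2 \asymp (\log x)^2$ and the extra $\log x$ from $\Li(x)$ are absorbed because the displayed bound has a slightly generous exponent; one sets $\Delta = x^{-1/6}[K:\QQ]^{1/3}(\log x)^{2/3}(\log(Nx))^{1/3}$ to be precise, matching the structure of the one-curve choice.) The one genuinely new point over Theorem~\ref{T:effective Sato-Tate} is the group-theoretic input — identifying $\SU(2)\times\SU(2)$ as the Sato-Tate group and matching its irreducibles to the Rankin-Selberg-type motives $\Sym^i E_1 \otimes \Sym^j E_2$; everything analytic is a two-variable bookkeeping exercise on top of \eqref{eq:explicit formula2}. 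The main obstacle, such as it is, is making sure the product of two Vinogradov approximants still has summable-enough character coefficients: the key estimate is that $\sum_m|c_m-c_{m+2}|$ converges (with a bound independent of $M$, namely $O(1/\Delta)$), so the double sum factors as a product of two manageable one-variable sums rather than producing an unmanageable diagonal.
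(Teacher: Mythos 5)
Your strategy is exactly the paper's: identify the Sato--Tate group of $E_1\times_K E_2$ as $\SU(2)\times\SU(2)$, match $\chi_{m_1}\otimes\chi_{m_2}$ with $\Sym^{m_1}E_1\otimes\Sym^{m_2}E_2$, sandwich $\chi_{I_1}\chi_{I_2}$ by products of Vinogradov approximants with $r=1$, apply \eqref{eq:explicit formula2} termwise, and balance the three error terms against $\Delta\,\Li(x)$. The conceptual content is all present, but two of your parameter choices are off in ways that, as written, fail to deliver the stated bound. First, $M=\lceil\Delta^{-2}\rceil$ does not make the truncation term negligible: that term is $O\left(x/(M\Delta^2\log x)\right)$, which with $M=\Delta^{-2}$ equals $O(x/\log x)$ --- the same order as the main term, not the $O(\Delta^2 x/\log x)$ you assert (you appear to have dropped the $\Delta^{-2}$ when substituting $M^{-1}=\Delta^2$). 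You need $M=\lceil\Delta^{-3}\rceil$, as the paper takes, to reduce it to $O(\Delta\,\Li(x))$; the extra logarithm this introduces into $\log(N(x+M))$ and $\log M$ is harmless. Second, your ``precise'' choice $\Delta = x^{-1/6}[K:\QQ]^{1/3}(\log x)^{2/3}(\log(Nx))^{1/3}$ is too small: equating $\Delta x/\log x$ with $[K:\QQ]\,x^{1/2}\log(Nx)(\log M)^2\Delta^{-2}$ and using $(\log M)^2\asymp(\log x)^2$ forces $\Delta^3\asymp [K:\QQ]\,x^{-1/2}\log(Nx)(\log x)^3$, so the exponent on $\log x$ in $\Delta$ must be $1$ (the paper's choice). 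With your $\Delta$, the explicit-formula error term exceeds the claimed bound by a factor of $(\log x)^{2/3}$.

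A third slip is harmless: the dimension of $\chi_{m_1}\otimes\chi_{m_2}$ is $(m_1+1)(m_2+1)$, not $m_1+m_2$, so the weighted sum to control is $\sum m_1m_2\,|d_{m_1}||d_{m_2}|$ (as in the paper), which factors as $\left(\sum_m m|d_m|\right)^2 = O((\log M)^2/\Delta^2)$ --- the same bound you obtained, so nothing downstream changes. With $M=\lceil\Delta^{-3}\rceil$ and the corrected $\Delta$, your argument coincides with the paper's proof and yields the theorem.
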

To prove Theorem~\ref{T:two elliptic curves}, note that 
by \cite[\S 4.2]{fkrs}
the Sato-Tate group of the 1-motive associated to $E_1 \times_K E_2$ is $\SU(2) \times \SU(2)$, whose conjugacy classes we identify with $[0, \pi] \times [0, \pi]$. 
For real numbers $A_1, B_1, A_2, B_2, \Delta$ with
\[0 <\Delta <\frac{1}{2}, \qquad \Delta \leq B_1-A_1, B_2 - A_2 \leq 1-\Delta, \] 
put $d_{m_i,A_i,B_i} = c_{m_i,A_i,B_i} - c_{m_i+2,A_i,B_i}$; then
\[
F_{A_1,B_1}(\theta_1) F_{A_2,B_2}(\theta_2) = 
\sum_{m_1,m_2=0}^M d_{m_1,A_1,B_1} d_{m_2,A_2,B_2}
\chi_{m_1}(\theta_1) \chi_{m_2}(\theta_2)
+ O\left( \frac{(r/\pi)^r}{M^r \Delta^{2r}} \right).
\]
If we take $r=1$, then \eqref{eq:vinogradov} implies
\[
\sum_{m_1,m_2=0}^{M-2} m_1m_2  \left| d_{m_1,A_1,B_1}  d_{m_2,A_2,B_2} \right| 
= O\left( \frac{(\log M)^2}{\Delta^2} \right).
\]
We thus have
\begin{align}
\label{eq:primary estimate}
\lefteqn{\sum_{\Norm(\frakp) \leq x, \frakp \ndiv N}
F_{A_1,B_1}(\theta_{1,\frakp})F_{A_2,B_2}(\theta_{2,\frakp})}\\
\nonumber
&= d_{0,A_1,B_1} d_{0,A_2,B_2} \Li(x) + O \left( \frac{[K:\QQ] x^{1/2}  \log (N(x+M)) (\log M)^2}{\Delta^2} \right) + O \left( \frac{x}{M \Delta^2 \log x} \right).
\end{align}
As in the proof of Theorem~\ref{T:effective Sato-Tate}, all that remains is to balance the error terms
in \eqref{eq:primary estimate} with each other and with $O(\Delta \Li(x))$ by taking
\[
\Delta = x^{-1/6} [K:\QQ]^{1/3} (\log x) (\log (Nx))^{1/3},
\qquad
M = \lceil \Delta^{-3} \rceil.
\]
This yields Theorem~\ref{T:two elliptic curves} as desired.

Theorem~\ref{T:two elliptic curves} immediately implies that
there exists a prime ideal $\frakp$ of $K$
with $\Norm(\frakp) = O([K:\QQ]^2 (\log N)^2 (\log \log N)^6)$ for which $E_1$ and $E_2$ have good reduction and $a_{\frakp}(E_1) \neq a_{\frakp}(E_2)$. Namely, if we fix two disjoint intervals $[A_1, B_1]$ and $[A_2, B_2]$,
then for $\Delta$ as above, the count of prime ideals is at least
$c_1 \Li(x) (1 - c_2 \Delta)$ for some absolute constants $c_1, c_2$.
This count is forced to be positive as soon as $c_2 \Delta < 1$, proving the claim.

Note however that applying Theorem~\ref{T:two elliptic curves} is not the correct optimization for this problem,
as the parameters of the proof were tuned to optimize for large $x$ rather than for small $x$.
Changing this optimization leads to a sharper result.

\begin{theorem} \label{T:distinguish elliptic curves}
With hypotheses and notation as in Theorem~\ref{T:two elliptic curves},
there exists a prime ideal $\frakp$ not dividing $N$
with $\Norm(\frakp) = O([K:\QQ]^2 (\log N)^2 (\log \log 2N)^2)$ 
such that $a_{\frakp}(E_1)$ and $a_{\frakp}(E_2)$ are nonzero and of opposite sign.
\end{theorem}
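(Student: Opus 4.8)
The plan is to re-run the argument of Section~\ref{sec:two elliptic curves} essentially verbatim, but --- as foreshadowed in the discussion preceding the theorem --- with the smoothing parameters $\Delta$ and $M$ held at well-chosen \emph{absolute constants} rather than shrinking with $x$; this is the entire source of the improvement from $(\log\log N)^6$ to $(\log\log 2N)^2$. Since $a_\frakp(E_i) = 2\Norm(\frakp)^{1/2}\cos\theta_{i,\frakp}$, it is enough to produce a prime $\frakp \ndiv N$ of the claimed norm with $\theta_{1,\frakp}$ in a fixed closed interval $I_1 \subset (0,\pi/2)$ and $\theta_{2,\frakp}$ in a fixed closed interval $I_2 \subset (\pi/2,\pi)$: then $a_\frakp(E_1) > 0 > a_\frakp(E_2)$, so in particular these traces are nonzero and of opposite sign. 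Fix two such intervals $I_i = [2\pi\alpha_i, 2\pi\beta_i]$ of positive length once and for all.

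For small $\Delta > 0$, put $A_i = \alpha_i + \Delta/2$ and $B_i = \beta_i - \Delta/2$. As in the proof of Theorem~\ref{T:effective Sato-Tate}, one has $0 \leq F_{A_i,B_i} \leq \chi_{I_i}$ pointwise while $d_{0,A_i,B_i} = \mu_{\ST}(I_i) + O(\Delta)$; since the four factors below are nonnegative,
\[
\#\{\frakp \ndiv N : \Norm(\frakp) \leq x,\ \theta_{1,\frakp} \in I_1,\ \theta_{2,\frakp} \in I_2\}
\ \geq\ \sum_{\Norm(\frakp) \leq x,\ \frakp \ndiv N} F_{A_1,B_1}(\theta_{1,\frakp})\, F_{A_2,B_2}(\theta_{2,\frakp}),
\]
and I would estimate the right-hand side by \eqref{eq:primary estimate}. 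The key move is then to fix $\Delta$ to be an absolute constant small enough that $d_{0,A_1,B_1}\, d_{0,A_2,B_2} \geq \tfrac12\mu_{\ST}(I_1)\mu_{\ST}(I_2) =: \kappa > 0$, and to fix $M$ to be an absolute constant large enough that the term $O(x/(M\Delta^2\log x))$ of \eqref{eq:primary estimate} is at most $\tfrac12\kappa\Li(x)$ for all large $x$. With $\Delta$ and $M$ absolute, the remaining error in \eqref{eq:primary estimate} is $O([K:\QQ]\, x^{1/2}\log(N(x+M))(\log M)^2/\Delta^2) = O([K:\QQ]\, x^{1/2}\log(Nx))$, which also absorbs the $O(\log N)$ primes dividing $N$; hence the count above is at least $\kappa\,\Li(x) - C\,[K:\QQ]\, x^{1/2}\log(Nx)$ for an absolute constant $C$.

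Using $\Li(x) \gg x/\log x$, this lower bound is positive as soon as $x^{1/2} \gg [K:\QQ]\,(\log x)\,\log(Nx)$. A routine bootstrap then shows that this holds once $x$ is a large enough absolute multiple of $[K:\QQ]^2(\log 2N)^2(\log\log 2N)^2$ --- the factor $(\log\log 2N)^2$ being, up to constants, $(\log x)^2$ after solving the inequality, using that $\log[K:\QQ] = O(\log\log 2N)$ in the relevant range (the conductor entering the functional equation dominating a power of $|d_K|$, hence a power of $[K:\QQ]$ by Minkowski). Taking $x$ equal to such a multiple and letting $\frakp$ be any prime realized in the count proves the theorem.

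I expect the only genuine obstacle to be the conceptual one already flagged: recognizing that, in contrast to Theorems~\ref{T:effective Sato-Tate} and~\ref{T:two elliptic curves}, one must \emph{not} send $\Delta$ and $1/M$ to $0$ here. Because we need only that the count be positive, not an accurate asymptotic, pinning $\Delta$ and $M$ at absolute constants kills the spurious powers of $\log x$ that would otherwise inflate the exponent of $\log\log N$. Everything else is bookkeeping, the one slightly delicate point being how the $[K:\QQ]$ factors sit inside the logarithms during the final bootstrap.
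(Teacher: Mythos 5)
Your argument is correct, but it is not the proof the paper gives; it is essentially the alternative route that the authors themselves sketch in the Remark immediately following Theorem~\ref{T:distinguish elliptic curves}. The paper's proof keeps $\Delta$ shrinking with $x$ but reruns the construction of Lemma~\ref{lemma:vinogradov} with smoothing order $r=2$ instead of $r=1$: the stronger decay in \eqref{eq:vinogradov} replaces the weight $(\log M)^2/\Delta^2$ by $1/\Delta^4$ and the truncation term $x/(M\Delta^2\log x)$ by $x/(M^2\Delta^4\log x)$, and balancing all errors against $\Delta \Li(x)$ then gives $\Delta = x^{-1/10}[K:\QQ]^{1/5}(\log x)^{1/5}(\log (Nx))^{1/5}$ and the positivity condition $x \gg [K:\QQ]^2(\log x)^2(\log (Nx))^2$. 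You instead keep $r=1$ (so you can quote \eqref{eq:primary estimate} verbatim) but freeze $\Delta$ and $M$ at absolute constants, which makes the main term a positive constant times $\Li(x)$ and reduces the $x$- and $N$-dependent error to $O([K:\QQ]\,x^{1/2}\log(Nx))$; this yields the identical threshold $x^{1/2} \gg [K:\QQ](\log x)\log(Nx)$ and hence the same bound. The two devices --- extra smoothing versus not shrinking the smoothing at all --- both serve to kill the spurious powers of $\log M \asymp \log(1/\Delta)$ that produce the exponent $6$ on $\log\log N$ in the corollary of Theorem~\ref{T:two elliptic curves}, and your version arguably isolates more cleanly the one fact needed (positivity of the count, not an asymptotic). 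The only point deserving care is the final bootstrap: deducing the stated bound from $x \gg [K:\QQ]^2(\log x)^2(\log(Nx))^2$ needs $\log x = O(\log\log 2N)$ at the chosen $x$, hence $\log [K:\QQ] = O(\log\log 2N)$; you flag this (the paper leaves it implicit), and your justification via the conductor dominating a power of the discriminant is the standard one.
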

\begin{proof}
Fix once and for all some $\epsilon > 0$,
then put 
\[
A_1 = \epsilon, \quad B_1 = 1/4 - \epsilon, \quad A_2 = 1/4 + \epsilon, \quad B_2 = 1/2 - \epsilon.
\]
We set notation as in the proof of Theorem~\ref{T:two elliptic curves} except that now we take $r = 2$.
In this case, we have
\[
\sum_{m_1,m_2=0}^{M-2} m_1m_2  \left| d_{m_1,A_1,B_1}  d_{m_2,A_2,B_2} \right| 
= O\left( \frac{1}{\Delta^4} \right)
\]
and
\begin{align*}
\lefteqn{\sum_{\Norm(\frakp) \leq x, \frakp \ndiv N}
F_{A_1,B_1}(\theta_{1,\frakp})F_{A_2,B_2}(\theta_{2,\frakp})}\\
&= d_{0,A_1,B_1} d_{0,A_2,B_2} \Li(x) + O \left( \frac{[K:\QQ]x^{1/2}  \log (N(x+M))}{\Delta^4} \right) + O \left( \frac{x}{M^2 \Delta^4 \log x} \right).
\end{align*}
This time, balancing the error terms with $O(\Delta \Li(x))$ yields
\[
\Delta = x^{-1/10} [K:\QQ]^{1/5} (\log x)^{1/5} (\log (Nx))^{1/5},
\qquad
M = \lceil \Delta^{-5/2} \rceil.
\]
The proof of Theorem~\ref{T:two elliptic curves}
implies that there exists an absolute constant $c$ such that there is a prime ideal of the desired form
whenever $c \Delta < 1$. This is the same as $x > c^5 [K:\QQ]^2 (\log x)^2 (\log (Nx))^{2}$,
from which the claim follows.
\end{proof}

\begin{remark}
In the proof of Theorem~\ref{T:distinguish elliptic curves}, it is not really necessary to take $\Delta$
decreasing to 0. It would suffice to fix $A_1, B_1, A_2, B_2, \Delta$ so that the functions
$F_{A_1,B_1}$ and $F_{A_2,B_2}$ have disjoint support, then balance the error terms in
\eqref{eq:primary estimate} against the main term.
\end{remark}

\begin{remark}
The conclusion of Theorem~\ref{T:distinguish elliptic curves}
remains true if $E_1, E_2$ are isogenous over $\overline{\mathbb{Q}}$ but not over $K$: in this case they differ by a twist, so the claim reduces 
directly to effective Chebotarev \cite[Th\'eor\`eme~5]{serre-chebotarev}.
\end{remark}

For the remainder of \S\ref{sec:two elliptic curves},
retain notation as in Theorem~\ref{T:distinguish elliptic curves} but assume for simplicity that $K = \QQ$.

\begin{remark} \label{R:compare with l-adic}
Theorem~\ref{T:distinguish elliptic curves},
which distinguishes two Frobenius traces using their archimedean
behavior, should be compared with similar results which distinguish the traces using their mod-$\ell$
behavior for some prime $\ell$. 
For example, in \cite[\S 8.3, Th\'eor\`eme~21]{serre-chebotarev}, Serre shows that 
there exists a prime number $p$ not dividing $N$
with $p = O((\log N)^2 (\log \log 2N)^{12})$ 
such that $a_{p}(E_1)$ and $a_{p}(E_2)$ differ modulo some auxiliary prime $\ell$.

Both this argument and Theorem~\ref{T:distinguish elliptic curves} give upper bounds
on the norm of a prime ideal $\frakp$ for which  $a_{\frakp}(E_1)$ and $a_{\frakp}(E_2)$ differ.
However, Serre has subsequently remarked \cite[p. 715, note 632.6]{serre-oeuvres} 
that by replacing the mod-$\ell$ argument with an $\ell$-adic argument,
one can improve these bounds to $O((\log N)^2)$; since the details have not appeared in
print elsewhere, Serre has kindly provided them and permitted us to reproduce them
as Theorem~\ref{T:Serre bound} and Corollary~\ref{C:Serre bound} below.
This suggests the possibility of a similar improvement to Theorem~\ref{T:distinguish elliptic curves};
see Remark~\ref{R:Serre remark}.
\end{remark}

\begin{theorem}[Serre] \label{T:Serre bound}
Let $\Gamma$ be a group, let $\ell$ be a prime number, let $r$ be a positive integer, and let
$\rho_1, \rho_2: \Gamma \to \GL_r(\ZZ_\ell)$ be two homomorphisms with distinct traces.
Then there exist a finite quotient $G$ of $\Gamma$ and a nonempty subset $C$ of $G$ with the following properties.
\begin{enumerate}
\item[(a)]
The order of $G$ is at most $\ell^{2r^2} - 1$.
\item[(b)]
For any $\gamma \in \Gamma$ whose image in $G$ belongs to $C$, $\Trace(\rho_1(\gamma)) \neq \Trace(\rho_2(\gamma))$.
\end{enumerate}
\end{theorem}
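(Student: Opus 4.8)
The plan is to reduce the statement to a pigeonhole argument inside the finite group $\GL_r(\ZZ/\ell^N\ZZ)$ for a well-chosen level $N$, using the fact that the two homomorphisms $\rho_1,\rho_2$ have distinct traces as functions on $\Gamma$. First I would form the product homomorphism $\rho = (\rho_1,\rho_2)\colon \Gamma \to \GL_r(\ZZ_\ell) \times \GL_r(\ZZ_\ell)$. Since the traces of $\rho_1$ and $\rho_2$ differ, there is some $\gamma_0 \in \Gamma$ with $\Trace(\rho_1(\gamma_0)) \neq \Trace(\rho_2(\gamma_0))$; because these traces lie in $\ZZ_\ell$, they already differ modulo $\ell^N$ for all sufficiently large $N$. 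The subtlety is that passing to a finite quotient $\GL_r(\ZZ/\ell^N\ZZ)$ of the image is not enough by itself to \emph{separate} the traces as class functions — one needs the distinguishing set $C$ to be well-defined on the quotient, i.e.\ the condition $\Trace(\rho_1(\gamma)) \neq \Trace(\rho_2(\gamma))$ must depend only on the image of $\gamma$ in $G$. This forces us to choose $G$ so that reduction mod $\ell^N$ of the pair of matrices $(\rho_1(\gamma),\rho_2(\gamma))$ already determines whether the traces are congruent mod $\ell^N$; then we take $N=1$ to make this automatic, at the cost of only detecting trace differences that survive mod $\ell$.

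The main idea to make $N=1$ work is the observation that traces being distinct in $\ZZ_\ell$ does not guarantee they are distinct mod $\ell$, so instead of reducing $\rho$ itself mod $\ell$ one should first twist: replace $\rho_2$ by a conjugate-free rearrangement, or more precisely, pass to the \emph{difference of characteristic polynomials} or to the $\ZZ_\ell$-subalgebra generated by the images. Concretely, I would let $\Delta\colon\Gamma\to\ZZ_\ell$ be the function $\gamma\mapsto \Trace(\rho_1(\gamma))-\Trace(\rho_2(\gamma))$; it is not a homomorphism, but it is a coordinate of the homomorphism $\rho$ into the matrix algebra $M_r(\ZZ_\ell)\times M_r(\ZZ_\ell)$. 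Let $R\subseteq M_r(\ZZ_\ell)^2$ be the closed $\ZZ_\ell$-subalgebra topologically generated by $\rho(\Gamma)$, and let $n\ge 0$ be minimal such that $\Delta$ is nonzero mod $\ell^{n+1}$; then rescaling $\Delta$ by $\ell^{-n}$ and reducing mod $\ell$ gives a nonzero $\FF_\ell$-valued function on $\Gamma$. The quotient $G$ is then taken to be the image of $\Gamma$ in $(R/\ell R)^\times$ — or more safely, the image of $\Gamma$ in $\GL_r(\FF_\ell)\times\GL_r(\FF_\ell)$ intersected with this algebra structure — and $C$ is the (now well-defined) set of $g\in G$ for which the rescaled-and-reduced trace difference is nonzero. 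By construction $C\neq\emptyset$ (it contains the image of any $\gamma$ witnessing minimality of $n$), and property (b) holds because $\Trace(\rho_1(\gamma))\equiv\Trace(\rho_2(\gamma))\pmod{\ell^{n+1}}$ would be violated, forcing the $\ZZ_\ell$-traces to be unequal.

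For the order bound (a), I would bound $|G|$ by the order of the unit group of $R/\ell R$. The algebra $R$ sits inside $M_r(\ZZ_\ell)\times M_r(\ZZ_\ell)$, so $R/\ell R$ is an $\FF_\ell$-algebra of dimension at most $2r^2$; its unit group therefore has order at most $\ell^{2r^2}-1$, since a finite $\FF_\ell$-algebra of $\FF_\ell$-dimension $d$ has at most $\ell^d-1$ units (removing at least the element $0$, and in fact one can do slightly better, but $\ell^{2r^2}-1$ suffices). The image of $\Gamma$ in $(R/\ell R)^\times$ under $\rho\bmod\ell$ is a subgroup of this unit group, and the distinguishing property only depends on this image together with the linear functional "rescaled trace difference mod $\ell$," which factors through $R/\ell R$. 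Hence $|G|\le\ell^{2r^2}-1$, giving (a).

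The step I expect to be the main obstacle is the rescaling argument: one must check that dividing $\Delta$ by $\ell^n$ is legitimate, i.e.\ that $\Delta$ genuinely takes values in $\ell^n\ZZ_\ell\setminus\ell^{n+1}\ZZ_\ell$ for some finite $n$ (which follows from $\Delta\not\equiv 0$ and the fact that $\ZZ_\ell$ is a discrete valuation ring, so $\bigcap_n \ell^n\ZZ_\ell=0$), and more importantly that the rescaled reduced functional is a well-defined function on the finite quotient $G$ rather than only on $\Gamma$. This requires that the $\FF_\ell$-linear functional $x\mapsto(\ell^{-n}(\Trace_1-\Trace_2))(x)\bmod\ell$ on $R$ factor through $R/\ell R$, which is automatic, but one must be careful that $n$ is an invariant of the algebra $R$ and not an artifact of a particular choice of generators — this is fine because $n$ is defined purely in terms of the function $\Delta$ on $\Gamma$, not on a presentation. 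Once this bookkeeping is in place the rest is routine.
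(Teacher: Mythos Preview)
Your proposal is correct and follows essentially the same argument as the paper's proof, which likewise forms the $\ZZ_\ell$-subalgebra $A$ of $M_r(\ZZ_\ell)^2$ generated by the image of $(\rho_1,\rho_2)$, takes $G$ to be the image of $\Gamma$ in $A/\ell A$, and defines $C$ via the rescaled linear form $\lambda(x_1,x_2)=\ell^{-m}(\Trace(x_1)-\Trace(x_2))$ reduced mod $\ell$. The obstacle you flag is resolved exactly as you indicate: since $\rho(\Gamma)$ is closed under multiplication, $R$ coincides with the $\ZZ_\ell$-linear span of $\rho(\Gamma)$, so $\ell^{-n}(\Trace_1-\Trace_2)$ maps all of $R$ into $\ZZ_\ell$ and descends to a well-defined $\FF_\ell$-linear functional on $R/\ell R$.
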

\begin{proof}
The argument is based on the proof of \cite[Satz~6]{faltings}.
Let $M$ be the (noncommutative) ring of $r \times r$ matrices over $\ZZ_\ell$, and let $A$ be the $\ZZ_\ell$-subalgebra of $M \times M$ generated by the image of $\rho_1 \times \rho_2: \Gamma \to \GL_r(\ZZ_\ell) \times \GL_r(\ZZ_\ell)$. Let $G$ be the image of $\Gamma$ in $A/\ell A$; since $\rank(A) \leq 2r^2$, 
the order of $G$ is at most $\ell^{2r^2}-1$. To define $C$, let $m$ be the largest nonnegative integer such that
$\Trace(\rho_1(\gamma)) \equiv \Trace(\rho_2(\gamma)) \pmod{\ell^m}$ for all $\gamma \in \Gamma$; this integer
exists because we assumed that $\rho_1, \rho_2$ have distinct traces. Define the linear form
$\lambda: A \to \ZZ_\ell$ by
\[
\lambda(x_1, x_2) = \ell^{-m} (\Trace(x_1) - \Trace(x_2));
\]
by reduction modulo $\ell$, $\lambda$ defines a linear form $\overline{\lambda}: A/\ell A \to \FF_\ell$.
Let $C$ be the set of $g \in G$ for which $\overline{\lambda}(g) \neq 0$; this set is nonempty by the choice  of $m$.
For any $\gamma \in \Gamma$ whose image in $G$ belongs to $C$, $\Trace(\rho_1(\gamma)) \not\equiv \Trace(\rho_2(\gamma)) \pmod{\ell^{m+1}}$.
\end{proof}

\begin{cor} \label{C:Serre bound}
Assume the Riemann hypothesis for Artin $L$-functions.
Then there exists a prime number $p$ not dividing $N$
with $p = O((\log N)^2)$ such that $a_{p}(E_1) \neq a_{p}(E_2)$.
\end{cor}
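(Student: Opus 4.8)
The plan is to combine Theorem~\ref{T:Serre bound} with the effective Chebotarev density theorem under the Riemann hypothesis for Artin $L$-functions, in the sharp form given by Serre \cite{serre-chebotarev}. Fix once and for all a small prime, say $\ell = 2$, and for $i = 1,2$ let $\rho_i \colon G_{\QQ} \to \GL_2(\ZZ_\ell)$ be the representation on the $\ell$-adic Tate module $T_\ell(E_i)$; it is unramified outside $N\ell$ and satisfies $\Trace(\rho_i(\Frob_p)) = a_p(E_i)$ for every prime $p \nmid N\ell$. Since $E_1$ and $E_2$ are not isogenous, the isogeny theorem of Faltings \cite{faltings} shows that $a_p(E_1) \neq a_p(E_2)$ for at least one prime $p$ of good reduction, so $\rho_1$ and $\rho_2$ have distinct traces as functions on $G_{\QQ}$. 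Apply Theorem~\ref{T:Serre bound} with $\Gamma = G_{\QQ}$ and $r = 2$: this produces a finite quotient $G$ of $G_{\QQ}$ with $|G| \leq \ell^8 - 1$, together with a nonempty $C \subseteq G$ such that $\Trace(\rho_1(\gamma)) \neq \Trace(\rho_2(\gamma))$ whenever the image of $\gamma$ in $G$ lies in $C$. Note that $C$ is a union of conjugacy classes, because the linear form $\lambda$ (hence $\overline{\lambda}$) in the proof of Theorem~\ref{T:Serre bound} is built from traces and so is invariant under conjugation.

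Now write $G = \Gal(L/\QQ)$ for the number field $L$ fixed by the open kernel of $G_{\QQ} \to G$. Since $\rho_1 \times \rho_2$, and hence its composite with the projection to $G$, is unramified at every $p \nmid N\ell$, the field $L$ is unramified outside $N\ell$. Because $\ell$ is held fixed, $[L:\QQ] = |G| \leq \ell^8 - 1$ is bounded by an absolute constant; the conductor--discriminant formula, together with the standard bound on wild ramification (harmless once the degree is bounded), therefore gives $\log d_L = O(\log N)$, where $d_L$ is the absolute value of the discriminant of $L$ and we use that $N$ exceeds an absolute constant. Assuming the Riemann hypothesis for the Artin $L$-functions of $L/\QQ$, Serre's effective Chebotarev estimate \cite[Th.~4]{serre-chebotarev} now yields a prime $p$ unramified in $L$ with $\Frob_p \subseteq C$ and $p = O((\log d_L)^2) = O((\log N)^2)$; since only $O(\log N)$ primes divide $N\ell$, we may further require $p \nmid N\ell$ without affecting the bound. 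For such $p$ we have $\Trace(\rho_i(\Frob_p)) = a_p(E_i)$ and $\Trace(\rho_1(\Frob_p)) \neq \Trace(\rho_2(\Frob_p))$, so $a_p(E_1) \neq a_p(E_2)$.

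The structure of the argument is thus a short assembly, and the point needing care is the effective Chebotarev input: obtaining the clean exponent $2$ --- rather than $O((\log N)^2 (\log\log 2N)^c)$, as in the mod-$\ell$ method of Remark~\ref{R:compare with l-adic} --- relies on Serre's optimized test function in the explicit formula, and one must verify that discarding the $O(\log N)$ primes dividing $N\ell$ (so that Frobenius traces genuinely compute the $a_p(E_i)$) is absorbed by the error term. Everything else --- the appeal to Faltings, the degree and discriminant bounds for $L$, and the bookkeeping inside Theorem~\ref{T:Serre bound} --- is routine once $\ell$ is fixed.
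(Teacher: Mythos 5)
Your proof is correct and follows essentially the same route as the paper's: apply Theorem~\ref{T:Serre bound} with $\ell=2$, $r=2$ to the Tate-module representations, view $G$ as the Galois group of an extension of absolutely bounded degree unramified outside $N\ell$, and invoke Serre's refined effective Chebotarev theorem under the Artin Riemann hypothesis. Your added observations (that $C$ is conjugation-stable since $\overline{\lambda}$ is built from traces, and that discarding the $O(\log N)$ primes dividing $N\ell$ is harmless) are correct details the paper leaves implicit.
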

\begin{proof}
Put $\ell = 2$, $r=2$, and $\Gamma = G_{\QQ}$, and apply Theorem~\ref{T:Serre bound} to the $\ell$-adic representations associated to $E_1, E_2$. The resulting group $G$ may be viewed as the Galois group of 
a finite extension of $\QQ$ of absolutely bounded degree unramified away from the primes dividing $N_1 N_2$.
The claim then follows from the effective Chebotarev theorem as stated in \cite[Th\'eor\`eme~6]{serre-chebotarev}.
\end{proof}

\begin{remark} \label{R:Serre remark}
The absence of a factor of $\log \log 2N$ in the bound appearing in Corollary~\ref{C:Serre bound}
is a consequence of \cite[Th\'eor\`eme~5]{serre-chebotarev}, which is a refinement
to effective Chebotarev described at the end of \cite{lagarias-odlyzko}.
Without such a refinement, the bound would have the form $O((\log N)^2 (\log \log 2N)^4)$
as indicated in the remarks following \cite[Th\'eor\`eme~5]{serre-chebotarev}.

This analysis suggests that it should also be possible to obtain a bound of the form
$O((\log N)^2)$ in Theorem~\ref{T:distinguish elliptic curves} by refining the analysis of \cite{kumar}
in the style of \cite[pp.\ 461--462]{lagarias-odlyzko}. We have not attempted to do this.
\end{remark}

\section{Notes on the general case}
\label{sec:general case}

We conclude by returning to the case of a general motive $M$ and sketching how to derive effective
equidistribution under the assumption of Conjecture~\ref{conj:motivic L-functions}. 

Given a function $F: \Conj(G) \to \CC$, one would like to approximate $\sum_{\Norm(\frakp) \leq x,
\frakp \ndiv N} F(\frakp)$ by writing $F$ in terms of characters using \eqref{eq:peter-weyl},
truncating the approximation by discarding the characters of large dimension, then
applying \eqref{eq:explicit formula2} to the characters of small dimension.
To get a meaningful result, it may be necessary to first replace $F$ by a close approximation for which
the coefficients in \eqref{eq:peter-weyl} decay sufficiently rapidly.

In case $G$ is connected, it is not hard to explain how to explicitly carry out these steps in terms
of classical Lie theory. Let $H$ be a Cartan subgroup of $G$, and identify $\Conj(G)$ with the quotient of $H$
by the action of the Weyl group $W$. The $\ZZ$-module of characters of $G$ may then be identified with the
$W$-invariant part of the $\ZZ$-module of characters of $H$. If we fix a Weyl chamber in the lattice of characters of $H$, then each element of the Weyl chamber is the highest weight of a unique irreducible representation of $G$,
whose full character is computed by the Weyl character formula. Any function $F: \Conj(G) \to \CC$
corresponds naturally to a $W$-invariant function $H \to \CC$; the coefficients computed by the Weyl character formula then provide the change-of-basis matrix converting the expansion of $F$ in \eqref{eq:peter-weyl} into the usual Fourier expansion of $F$. But this change-of-basis matrix is triangular, so we can invert it to
convert the Fourier expansion into the expansion in terms of characters. In the case $G = \SU(2)$, 
the Weyl character formula produces \eqref{eq:SU2 characters},
so this construction specializes to the conversion of \eqref{eq:F expansion1} into \eqref{eq:F expansion2}.

Suppose however that the coefficients of $F$ in \eqref{eq:peter-weyl} do not themselves converge sufficiently rapidly to obtain the desired estimates. We then take a small $H$-invariant neighborhood $U$ of the identity in $H$
and define $g: H \to \CC$ to be the characteristic function of $U$ rescaled so that its integral equals 1.
The ordinary Fourier coefficients of $g$ do decay: if we fix a basis $\chi_1,\dots,\chi_n$ of characters of $H$,
where $n = \rank(G)$, then the coefficient of $\chi_1^{m_1} \cdots \chi_n^{m_n}$ is on the order of
$\prod_{i=1}^n (|m_i|+1)^{-1}$. Taking the convolution $F \ast g^r$ over $H$ corresponds to pointwise multiplication
of Fourier coefficients, so we may achieve any desired polynomial decay of Fourier coefficients without changing $F$ too much. For $r$ sufficiently large, this decay persists when we convert Fourier coefficients into character coefficients. In the case $G = \SU(2)$, this process with $F$ taken to be the characteristic function of an interval gives precisely the function $F_{A,B}$ considered by Vinogradov.

In general, the group $G$ need not be connected; for example, for $E$ an elliptic curve over $K$ with
complex multiplication not defined over $K$, $G$ is the normalizer of $\SO(2)$ in $\SU(2)$, which has two connected components. In this case, one can carry out the above analysis after restricting the representations involved to the connected part of $G$.

\section*{Acknowledgements}

This paper arose from discussions with Francesc Fit\'e and Kristin Lauter.
Thanks to them and to Dorian Goldfeld, Jeffrey Hoffstein, Kumar Murty, Jeremy Rouse, Jean-Pierre Serre, Freydoon Shahidi, Igor Shparlinski, and Andrew Sutherland for helpful remarks.

\end{document}